\documentclass[12pt]{amsart}

\usepackage[utf8]{inputenc}
\usepackage[T1]{fontenc}
\usepackage{amsmath,amssymb,amsthm}
\usepackage{geometry}
\newtheorem{theorem}{Theorem}[section]
\newtheorem{proposition}{Proposition}[section]
\newtheorem{lemma}{Lemma}[section]
\newtheorem{corollary}{Corollary}[section]
\newtheorem{definition}{Definition}[section]
\newtheorem{remark}{Remark}[section]

\begin{document}

\title[A Rigidity Theorem for Flat Graph Hypersurfaces]{A Rigidity Theorem for Flat Graph Hypersurfaces of Variable-Returns-to-Scale Cobb--Douglas Production Functions}

\author[B.-Y. Chen]{Bang-Yen Chen}

\address{Department of Mathematics, Michigan State University,
                619 Red Cedar Road,
                East Lansing, 48824-1027 MI, USA}
\email{chenb@msu.edu}

\author[S. Deshmukh]{Sharief Deshmukh}

\address{Department of Mathematics, College of Science, King Saud
University,
P.O. Box 2455,
11451 Riyadh,
Saudi Arabia}
\email{shariefd@ksu.edu.sa}

\author[N.B. Turki]{Nasser Bin Turki}

\address{Department of Mathematics, College of Science, King Saud
University,
P.O. Box 2455,
11451 Riyadh,
Saudi Arabia}
\email{nassert@ksu.edu.sa}

\author[A.-D. V\^{\i}lcu]{Alina-Daniela V\^{\i}lcu}

\address{Department of Computer Science, Information Technology, Mathematics and Physics, Petroleum-Gas University of Ploie\c{s}ti,
                Bd. Bucure\c{s}ti 39,
                 100680 Ploie\c{s}ti,
                Romania}
\email{daniela.vilcu@upg-ploiesti.ro}

\author[G.-E. V\^{\i}lcu]{Gabriel-Eduard V\^{\i}lcu}

\address{"Gheorghe Mihoc-Caius Iacob" Institute of Mathematical Statistics
and Applied Mathematics of the Romanian Academy,
Calea 13 Septembrie 13,
050711 Bucharest,
Romania}

\address{Department of Mathematics and Informatics, National University of Science and Technology Politehnica Bucharest,
                313 Splaiul Independen\c{t}ei,
                 060042 Bucharest, Romania}
\email{gabriel.vilcu@upb.ro}

\begin{abstract}

We study graph hypersurfaces associated with
higher-dimensional ($n>2$) Cobb--Douglas production functions with
variable returns to scale. We prove a rigidity theorem showing that
every flat model in this class is necessarily homogeneous. The proof relies on an affine
dimension-reduction principle that transfers the higher-dimensional
flatness condition to suitable two-dimensional restrictions. We then
obtain a complete classification in the non-degree-one case: every
such flat model is a power of a positive linear form. For degree one,
we give a local structural description in terms of envelopes on the
rank-one set, with linearity on open rank-zero regions. These results
extend the two-input analysis recently developed in
\cite{Previous2D} to the higher-dimensional setting, where the
degree-one case exhibits a genuinely different behavior.
\end{abstract}

\maketitle

\section{Introduction}\label{S1}

The differential geometry of production functions has become an active
research area, revealing close connections between the geometry of graph
hypersurfaces and the analytical behavior of the associated production models.
Among the geometric conditions investigated in the literature, curvature conditions occupy a distinguished position because they often lead to rigidity phenomena: a strong geometric assumption may impose
severe restrictions on the admissible functional forms of production
technologies \cite{ACDV,AE2,AM,CS,CH2,CH4,CDV14,CVVMJM,DecuV,Du,FuLuo,FW,Luo,Luo2025,NTV,VV,VV3,VGE}.

The homogeneous classification of Chen and V\^\i lcu \cite{CV2013} provides a natural starting point for the geometric study of flat production hypersurfaces. Its non-degree-one branch leads to the familiar power-of-a-linear-form family. The degree-one converse, however, is dimension-dependent. In two variables, Euler's identity forces the Hessian rank to be at most one, so every $C^2$ degree-one homogeneous graph is flat. For $n>2$, the same identity supplies only the radial null direction and does not force flatness. The homogeneity assumption, however, represents a substantial
restriction. Several production models in the economics literature have
been described using the terminology of variable production elasticities
or variable exponents, allowing the corresponding parameters to vary with
the production technology or with the proportions of the production
factors. Early
examples include the transcendental production function and related
flexible specifications \cite{Halter}. Ulveling and Fletcher
\cite{Ulveling} introduced Cobb--Douglas models with variable production
elasticities, while de Janvry \cite{Janvry} incorporated similar models
into his framework of generalized power production functions and
interpreted them as Cobb--Douglas production functions with variable
returns to scale. More recently, Reyn\`es \cite{Rey} revisited this
perspective by considering Cobb--Douglas production functions with
input-dependent output elasticities.

In the setting of Cobb--Douglas production functions with variable
returns to scale, these developments naturally raise the following
question:
\emph{Does flatness of the graph hypersurface force homogeneity
even when the exponent fields are allowed to vary?}

The two-input case was recently settled in \cite{Previous2D}, where the rigidity result and the corresponding classification were established for variable-exponent Cobb--Douglas production functions. The extension of the rigidity phenomenon to more than two inputs was left open there. The purpose of the present paper is to resolve this genuinely higher-dimensional problem.

Let $U\subset\mathbb{R}_+^n$ be a connected open set. We consider
production functions
\[
Y(x)
=
A\prod_{\alpha=1}^{n}x_{\alpha}^{f_{\alpha}(x)},
\qquad x\in U,
\]
where $A>0$ and each exponent field
$f_{\alpha}\in C^{2}(U)$ is homogeneous of degree zero. This assumption means that the exponent fields depend only on the relative proportions of the production
factors and remain unchanged under a common rescaling of all inputs. In general, when the exponents vary with the inputs, they do not coincide with the differential output elasticities. The sum
\[
r(x)
=
\sum_{\alpha=1}^{n}f_{\alpha}(x)
\]
governs the exact response to a common rescaling of all inputs, since $Y(tx)=t^{r(x)}Y(x)$ whenever $x,tx\in U$. We therefore call $r=r(x)$ the local return-to-scale index, which in general need not be constant.

Our main theorem is stated for $n>2$, which is precisely the new case
addressed here. Intrinsic flatness of the graph hypersurface forces $r$ to be constant. Consequently, every flat
variable-returns-to-scale Cobb--Douglas production function in the
considered class is homogeneous. Thus the geometric rigidity problem is reduced to the homogeneous case. We subsequently classify the resulting homogeneous flat models. For degrees different from one, flatness yields the rigid power-of-a-linear-form family. The degree-one case is genuinely dimension-sensitive: for $n>2$ linear homogeneity alone is not sufficient for flatness, and the flat rank-one branch is described locally as an envelope of a one-parameter family of linear forms. This corrects the higher-dimensional degree-one converse in Theorem~1.1 of \cite{CV2013} and the corresponding branch of Theorem~4.2 therein, while leaving the non-degree-one and two-input results unaffected. Combining the rigidity and homogeneous results gives a complete classification of the flat variable-exponent production functions studied here when the resulting homogeneity degree is different from one, together with a local structural description of the degree-one branch.

The proof of the rigidity theorem is based on an affine logarithmic reduction principle. After
introducing logarithmic ratio coordinates \(u\) and a logarithmic scale
variable \(v\), the production function admits the structural
representation
\[
\ln Y=H(u)+R(u)v,
\]
where \(R=R(u)\) is the logarithmic form of the local return-to-scale
index. The two-dimensional logarithmic rigidity mechanism underlying this
representation is already present in \cite{Previous2D}; we isolate it below
in an autonomous lemma for use as an input. The new step is to construct,
for every logarithmic ratio direction, a suitable affine two-dimensional
restriction of the original \(n\)-variable function. The rank-one Hessian
condition is inherited by these affine restrictions, and their logarithmic
ratio coordinates freeze all but one component of \(u\). The two-dimensional
rigidity result therefore implies \(\partial_iR=0\) in each ratio direction,
and hence \(\nabla R=0\).

The paper is organized as follows. Section~2 introduces
variable-returns-to-scale Cobb--Douglas production functions and
establishes their scaling properties. Section~3 develops the logarithmic
coordinate representation. Section~4 gives the geometric
characterization of flat graph hypersurfaces and proves that flatness is stable under affine restrictions. Section~5 establishes the multidimensional
rigidity theorem, while Section~6 discusses the homogeneous case.

\section{Cobb--Douglas Production Functions with Variable Returns to Scale}\label{S2}

Let
\(
U\subset\mathbb{R}_{+}^{n}
\)
be a connected open set. Throughout the paper, we consider production functions
\(
Y:U\longrightarrow\mathbb{R}_{+}
\)
of the form
\begin{equation}\label{generalCD}
Y(x)
=
A\prod_{\alpha=1}^{n}
x_{\alpha}^{f_{\alpha}(x)},
\qquad
A>0,
\end{equation}
where
\(
f_{\alpha}\in C^{2}(U),
\)
\(
\alpha=1,\ldots,n.
\)

We assume that every exponent field is homogeneous of degree zero,
namely,
\begin{equation}\label{homdegreezero}
f_{\alpha}(tx)=f_{\alpha}(x),
\end{equation}
whenever
\(
x,tx\in U,
\)
\(
t>0.
\)

Throughout this paper, homogeneity on a general open set is understood in the same relative sense: a function $F:U\to\mathbb R$ is homogeneous of degree $\beta$ if
\[
F(tx)=t^\beta F(x)
\]
whenever $x,tx\in U$ and $t>0$, without assuming that \(U\) is dilation invariant.

The assumption \eqref{homdegreezero} means that the exponent fields depend only on the
relative proportions of the production factors and remain unchanged
under a common rescaling of all production inputs. We emphasize that, when the exponent fields depend on $x$, they are not in general equal to the differential output elasticities. Indeed,
\[
\frac{x_i}{Y}\frac{\partial Y}{\partial x_i}
=f_i(x)+x_i\sum_{\alpha=1}^n\frac{\partial f_\alpha}{\partial x_i}(x)\ln x_\alpha.
\]
Accordingly, we use the terms \emph{exponent fields} or \emph{variable exponents} for the functions $f_\alpha$.

\begin{definition}\cite{Janvry}
A production function of the form
\eqref{generalCD},
whose exponent fields satisfy
\eqref{homdegreezero},
is called a
\emph{variable-returns-to-scale Cobb--Douglas production function}.

\end{definition}

\begin{remark}

If the exponent fields are constant,
the model reduces to the classical homogeneous Cobb--Douglas production
function \cite{CD}.

\end{remark}

For a variable-returns-to-scale Cobb--Douglas production function \(Y\) given by \eqref{generalCD}, we set
\begin{equation}\label{r-t-s-index}
r(x)
=
\sum_{\alpha=1}^{n}
f_{\alpha}(x),
\end{equation}
the local return-to-scale index.

The next proposition characterizes the homogeneous members of the
considered class of production functions.

\begin{proposition}\label{prop:homogeneous-characterization}

A variable-returns-to-scale Cobb--Douglas production function is
homogeneous if and only if its local return-to-scale index is constant.

\end{proposition}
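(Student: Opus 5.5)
The plan is to start from the scaling identity $Y(tx)=t^{r(x)}Y(x)$, which holds whenever $x,tx\in U$ and $t>0$. It follows directly from \eqref{homdegreezero}. Indeed,
\[
Y(tx)=A\prod_{\alpha}(tx_\alpha)^{f_\alpha(tx)}=A\prod_\alpha t^{f_\alpha(x)}x_\alpha^{f_\alpha(x)}=t^{r(x)}Y(x).
\]
Both implications will be read off from this identity, understanding homogeneity in the relative sense fixed above.

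For the direction \emph{constant index implies homogeneous}: if $r\equiv\beta$ on $U$, the identity becomes $Y(tx)=t^{\beta}Y(x)$ whenever $x,tx\in U$. This is exactly homogeneity of degree $\beta$, so nothing more is needed.

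For the converse, suppose $Y(tx)=t^{\beta}Y(x)$ whenever $x,tx\in U$, for some $\beta$. Comparing with the scaling identity and using $Y>0$ gives $t^{r(x)}=t^{\beta}$. Hence $(r(x)-\beta)\ln t=0$ for every $t>0$ with $tx\in U$. Since $U$ is open, for each $x\in U$ there is an interval of $t$ around $1$ with $tx\in U$, so some $t\neq1$ is admissible. This forces $r(x)=\beta$ at every point, and $r$ is constant. Connectedness of $U$ is not actually needed here, because $\beta$ is a single global constant by the definition of homogeneity.

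The only delicate point is the relative notion of homogeneity on a non-dilation-invariant set. The argument must use only rescalings $t$ close to $1$, whose availability is guaranteed by openness, and must not rely on large dilations. I expect this to be the sole obstacle, and it is a mild one.
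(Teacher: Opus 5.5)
Your proposal is correct and follows the paper's proof essentially verbatim: it derives the scaling identity $Y(tx)=t^{r(x)}Y(x)$ from degree-zero homogeneity of the exponent fields, reads off the forward direction directly, and for the converse uses openness of $U$ to obtain admissible $t\neq1$ near $1$, together with $Y>0$, to force $r(x)=\beta$. Your explicit verification of the scaling identity and your remark that connectedness is not needed are both accurate refinements of what the paper leaves implicit.
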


\begin{proof}
If $r(x)\equiv\beta$, then the degree-zero homogeneity of the exponent
fields gives
\[
Y(tx)=t^{r(x)}Y(x)=t^\beta Y(x)
\]
whenever $x,tx\in U$ and $t>0$. Hence $Y$ is homogeneous of degree
$\beta$.

Conversely, suppose that $Y$ is homogeneous of degree $\beta$. Fix $x\in U$.
Since $U$ is open, $tx\in U$ for all $t$ in some open interval containing $1$.
The degree-zero homogeneity of the exponent fields and the homogeneity of $Y$
give
\[
t^{r(x)}Y(x)=Y(tx)=t^\beta Y(x).
\]
Because $Y(x)>0$, this implies $t^{r(x)}=t^\beta$ for all such $t$, hence
$r(x)=\beta$. Since $x$ was arbitrary, $r$ is constant.
\end{proof}

The above results show that the entire deviation from homogeneity is
encoded in the local return-to-scale index.
Consequently, the rigidity theorem proved later may be interpreted as
showing that the flatness of the graph hypersurface eliminates every
possible deviation from homogeneity.

\section{Logarithmic Coordinates and Structural Representation}\label{S3}

The degree-zero homogeneity of the exponent fields introduced in the
previous section naturally leads to logarithmic coordinates that separate
the dependence on the proportions of the production factors from the
dependence on their common scale.

Let
\(
U\subset\mathbb R_+^n
\)
be the domain of the production function.
For
\(
x=(x_1,\ldots,x_n)\in U,
\)
introduce the logarithmic ratio coordinates
\[
u_i
=
\ln\frac{x_i}{x_n},
\qquad
i=1,\ldots,n-1,
\]
together with the logarithmic scale variable
\[
v=\ln x_n.
\]

The inverse transformation is given by
\[
x_i=e^{u_i+v},
\qquad
i=1,\ldots,n-1,
\]
and
\[
x_n=e^v.
\]

Thus,
\[
(x_1,\ldots,x_n)
\longmapsto
(u_1,\ldots,u_{n-1},v)
\]
defines a smooth diffeomorphism between \(U\) and an open connected set
\(
\Omega\subset\mathbb R^n.
\)

Let
\[
\pi:\Omega\longrightarrow\mathbb R^{n-1},
\qquad
\pi(u,v)=u,
\]
denote the canonical projection, and define the
\emph{logarithmic ratio domain} by
\(
D=\pi(\Omega).
\)

Since \(\Omega\) is connected and \(\pi\) is continuous,
\(D\) is connected. Moreover, since \(\pi\) is an open map,
\(D\) is open. Therefore,
\(D\) is an open connected subset of
\(\mathbb R^{n-1}\).

The following proposition shows that each exponent field depends only on
the logarithmic ratio coordinates.

\begin{proposition}\label{prop:F}
For every
\(
\alpha=1,\ldots,n,
\)
there exists a unique function
\(
F_\alpha\in C^2(D)
\)
such that
\(
f_\alpha(x)
=
F_\alpha(u),
\)
where
\[
u
=
\left(
\ln\frac{x_1}{x_n},
\ldots,
\ln\frac{x_{n-1}}{x_n}
\right).
\]
\end{proposition}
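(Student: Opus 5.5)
The plan is to define $F_\alpha$ directly on the fibres of the projection $\pi$, use the relative degree-zero homogeneity \eqref{homdegreezero} to show that this is well defined, and then read off $C^2$ regularity from a local section of $\pi$. Throughout, write $\Phi:U\to\Omega$ for the diffeomorphism $x\mapsto(u,v)$ of Section~\ref{S3}.

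\emph{Well-definedness.} Fix $u\in D$. By definition of $D=\pi(\Omega)$ there is at least one $v$ with $(u,v)\in\Omega$, i.e.\ at least one $x\in U$ with ratio coordinates $u$. The key observation concerns two such points $x=\Phi^{-1}(u,v)$ and $x'=\Phi^{-1}(u,v')$. From the inverse formulas $x_i=e^{u_i+v}$ and $x_n=e^{v}$ we get $x'=t\,x$ with $t=e^{v'-v}>0$. Both $x$ and $tx$ lie in $U$, so \eqref{homdegreezero} gives $f_\alpha(x')=f_\alpha(x)$. This holds even if the fibre $\{v:(u,v)\in\Omega\}$ is disconnected, because the homogeneity convention only requires the two endpoints $x$ and $tx$ to lie in $U$, not the segment between them. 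This is the one point where the non-dilation-invariant domain could cause trouble, and the relative definition of homogeneity adopted in Section~\ref{S2} handles it. Hence $F_\alpha(u):=f_\alpha(x)$, for any $x\in U$ with ratio coordinates $u$, is a well-defined function on $D$ satisfying $f_\alpha(x)=F_\alpha(u(x))$.

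\emph{Regularity.} Fix $u^0\in D$ and choose $v^0$ with $(u^0,v^0)\in\Omega$. Since $\Omega$ is open, there is a neighbourhood $W$ of $u^0$ in $\mathbb R^{n-1}$ with $W\times\{v^0\}\subset\Omega$. On $W$ we then have
\[
F_\alpha(u)=f_\alpha\bigl(\Phi^{-1}(u,v^0)\bigr).
\]
This is a composition of the smooth map $u\mapsto\Phi^{-1}(u,v^0)$ with $f_\alpha\in C^2(U)$, so $F_\alpha$ is $C^2$ near $u^0$. As $u^0$ was arbitrary, $F_\alpha\in C^2(D)$.

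\emph{Uniqueness.} Suppose $G_\alpha$ also satisfies $f_\alpha(x)=G_\alpha(u(x))$ on $U$. The map $x\mapsto u(x)$, which equals $\pi\circ\Phi$, is surjective onto $D$. Hence $G_\alpha(u)=f_\alpha(x)=F_\alpha(u)$ for every $u\in D$, so $G_\alpha=F_\alpha$.

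I expect no step to be a serious obstacle. The only delicate point is the possible disconnectedness of the fibres, and it is resolved by the relative homogeneity convention as explained in the well-definedness step.
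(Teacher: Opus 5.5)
Your proof is correct and follows the paper's argument essentially step by step: well-definedness comes from $x(u,v')=e^{v'-v}x(u,v)$ together with the relative (endpoint-only) homogeneity convention, and $C^2$ regularity comes from the local section $u\mapsto x(u,v^0)$ on a neighbourhood with $W\times\{v^0\}\subset\Omega$. Your explicit uniqueness argument, which uses that $\pi\circ\Phi$ maps $U$ onto $D$, is a small addition that the paper leaves implicit.
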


\begin{proof}

For
\(
(u,v)\in\Omega,
\)
write
\[
x(u,v)
=
\left(
e^{u_1+v},
\ldots,
e^{u_{n-1}+v},
e^v
\right).
\]

Fix
\(
u\in D.
\)
If
\(
(u,v_1),(u,v_2)\in\Omega,
\)
then
\[
x(u,v_2)
=
e^{\,v_2-v_1}x(u,v_1).
\]

Since each exponent field is homogeneous of degree zero,
\[
f_\alpha(x(u,v_2))
=
f_\alpha(x(u,v_1)).
\]
By the relative notion of homogeneity used here, it is enough that both
$x(u,v_1)$ and $x(u,v_2)$ belong to $U$.
Hence the function
\[
F_\alpha(u):=f_\alpha(x(u,v))
\]
is independent of the choice of $v$, and therefore is well defined.

It remains to verify the regularity of $F_\alpha$. Fix $u^0\in D$ and
choose $v^0$ such that $(u^0,v^0)\in\Omega$. Since $\Omega$ is open,
there exists an open neighborhood $B$ of $u^0$ such that
\(
B\times\{v^0\}\subset\Omega.
\)
On $B$ we may use the fixed representative $v^0$, so that
\[
F_\alpha(u)=f_\alpha(x(u,v^0)),\qquad u\in B.
\]
The map $u\mapsto x(u,v^0)$ is smooth and $f_\alpha\in C^2(U)$;
hence $F_\alpha|_B\in C^2(B)$. Since $u^0$ was arbitrary,
it follows that $F_\alpha\in C^2(D)$.

\end{proof}

Define
\(
R:D\longrightarrow\mathbb R
\)
by
\begin{equation}\label{functionR}
R(u)
=
\sum_{\alpha=1}^{n}
F_\alpha(u).
\end{equation}
Then,
by Proposition~\ref{prop:F}, we derive
\begin{equation}\label{return-index-log}
R
\left(
\ln\frac{x_1}{x_n},
\ldots,
\ln\frac{x_{n-1}}{x_n}
\right)
=
r(x),
\qquad
x\in U,
\end{equation}
where \(r\) denotes the local return-to-scale index defined by
\eqref{r-t-s-index}.

The following theorem is the structural result on which the remainder
of the paper is based.

\begin{theorem}
\label{thm:logrepresentation}

There exists a unique function
\(
H\in C^2(D)
\)
such that
\begin{equation}\label{rigidity-log-form}
\ln Y
=
H(u)+R(u)v.
\end{equation}

\end{theorem}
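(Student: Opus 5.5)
We compute $\ln Y$ directly in the logarithmic coordinates and read off $H$ explicitly. By Proposition~\ref{prop:F}, each exponent field factors through the ratio coordinates, $f_\alpha(x)=F_\alpha(u)$. Substituting $\ln x_i=u_i+v$ for $i\le n-1$ and $\ln x_n=v$ into
\[
\ln Y=\ln A+\sum_{\alpha=1}^n f_\alpha(x)\ln x_\alpha
\]
gives
\[
\ln Y=\ln A+\sum_{i=1}^{n-1}F_i(u)(u_i+v)+F_n(u)v
=\Bigl(\ln A+\sum_{i=1}^{n-1}u_iF_i(u)\Bigr)+R(u)\,v ,
\]
where $R$ is the function defined in \eqref{functionR}. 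We therefore take
\[
H(u):=\ln A+\sum_{i=1}^{n-1}u_iF_i(u),\qquad u\in D.
\]
This is defined on all of $D$, since the $F_i$ are. It lies in $C^2(D)$ because each $F_i\in C^2(D)$ and the factors $u_i$ are smooth. The identity \eqref{rigidity-log-form} then holds at every point $(u,v)\in\Omega$, which under the diffeomorphism $x\leftrightarrow(u,v)$ means at every $x\in U$.

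For uniqueness, suppose $\tilde H\in C^2(D)$ also satisfies $\ln Y=\tilde H(u)+R(u)v$ on $\Omega$. Fix $u\in D$. By the definition $D=\pi(\Omega)$ there is some $v$ with $(u,v)\in\Omega$. Subtracting the two representations at $(u,v)$ gives $H(u)-\tilde H(u)=0$, so $\tilde H=H$ on $D$.

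There is no real obstacle here. The only point needing care is that $H$ must be well defined as a function of $u$ alone, which is needed because $\Omega$ need not be a product set. This is settled by the formula above: the right-hand side contains no $v$, and the required $v$-independence of the $F_\alpha$ is exactly what Proposition~\ref{prop:F} supplies.
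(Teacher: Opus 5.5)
Your proposal is correct and follows the paper's proof essentially verbatim. It uses the same substitution $\ln x_i=u_i+v$, $\ln x_n=v$, the same explicit choice $H(u)=\ln A+\sum_{i=1}^{n-1}u_iF_i(u)$ with regularity inherited from Proposition~\ref{prop:F}, and a uniqueness argument (every $u\in D=\pi(\Omega)$ admits some $v$ with $(u,v)\in\Omega$) that spells out what the paper calls evident.
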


\begin{proof}

Starting from
\[
Y
=
A
\prod_{\alpha=1}^{n}
x_\alpha^{f_\alpha(x)},
\]
we obtain
\begin{equation}\label{forma-log}
\ln Y
=
\ln A
+
\sum_{\alpha=1}^{n}
F_\alpha(u)\ln x_\alpha.
\end{equation}

Since
\[
\ln x_i=u_i+v,
\qquad
i=1,\ldots,n-1,
\]
and
\[
\ln x_n=v,
\]
we have from \eqref{forma-log}
\[
\begin{aligned}
\ln Y
&=
\ln A
+
\sum_{i=1}^{n-1}
F_i(u)(u_i+v)
+
F_n(u)v
\end{aligned}
\]
and this implies
\begin{equation}\label{reppart}
\begin{aligned}
\ln Y
&=
\ln A
+
\sum_{i=1}^{n-1}
u_iF_i(u)
+
\left(
\sum_{\alpha=1}^{n}
F_\alpha(u)
\right)v.
\end{aligned}
\end{equation}

Define now
\begin{equation}\label{functionH}
H(u)
=
\ln A
+
\sum_{i=1}^{n-1}
u_iF_i(u).
\end{equation}
Then the required representation \eqref{rigidity-log-form} follows immediately from \eqref{reppart}, taking account of \eqref{functionR} and \eqref{functionH}. The uniqueness of \(H\) is evident.

\end{proof}

\begin{remark}

The logarithmic representation obtained in Theorem \ref{thm:logrepresentation} completely separates the variables
describing the proportions of the production factors from the variable
describing their common scale. The dependence on the logarithmic scale variable is affine, and its
coefficient is precisely the local return-to-scale index.
Consequently, every deviation from homogeneity is entirely encoded in
the function \(R\).

\end{remark}

\begin{remark}

The logarithmic representation is independent of the dimension.
The dimension of the problem is reflected only in the number of
logarithmic ratio coordinates.

\end{remark}

\section{Geometric Characterization of Flat Graph Hypersurfaces}\label{S4}

The logarithmic representation established in the previous section
provides the analytical structure of the production function.
The purpose of the present section is to develop the geometric
ingredient needed for the proof of the rigidity theorem.

We begin by recalling a standard characterization of flat graph hypersurfaces in terms of the rank of the Hessian matrix. We then prove that flatness is preserved under affine restrictions. This reduction provides the geometric mechanism that allows the multidimensional rigidity problem to be decomposed into a family of two-dimensional rigidity problems.

Let
\(
Z\in C^{2}(U),
\)
\(U\subset\mathbb R^{n},
\)
and consider its graph hypersurface
\[
M
=
\{
(x,Z(x))
:
x\in U
\}
\subset\mathbb R^{n+1}.
\]

The following result is standard.

\begin{proposition}
\label{prop:flatness}

The graph hypersurface of \(Z\) is flat if and only if the Hessian matrix $D^{2}Z$ of \(Z\) satisfies
\[
\operatorname{rank}D^{2}Z\le1.
\]

\end{proposition}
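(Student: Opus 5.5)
The plan is to compute the Riemann curvature tensor of the induced metric directly and read off the rank condition from the Gauss equation. The graph \(M\) is parametrized by \(X(x)=(x,Z(x))\), so the induced metric is
\[
g_{ij}=\delta_{ij}+Z_iZ_j,
\]
where \(Z_i=\partial Z/\partial x_i\). Take the unit normal \(N=(-\nabla Z,1)/W\) with \(W=\sqrt{1+|\nabla Z|^2}\). Since \(\partial_i\partial_j X=(0,Z_{ij})\), the second fundamental form is
\[
h_{ij}=\frac{Z_{ij}}{W}.
\]

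Next I apply the Gauss equation for a hypersurface in Euclidean space \(\mathbb R^{n+1}\):
\[
R_{ijkl}=h_{ik}h_{jl}-h_{il}h_{jk}=\frac{1}{W^2}\bigl(Z_{ik}Z_{jl}-Z_{il}Z_{jk}\bigr).
\]
Flatness, \(R\equiv0\), is therefore equivalent to the vanishing of every \(2\times 2\) minor of the symmetric matrix \(D^2Z\). Here a general minor means one with row indices \(\{i,j\}\) and column indices \(\{k,l\}\), not only principal minors. A matrix has rank at most one exactly when all its \(2\times2\) minors vanish, so the equivalence follows pointwise at every \(x\in U\).

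The only point needing care is that the Gauss equation gives all mixed minors \(Z_{ik}Z_{jl}-Z_{il}Z_{jk}\), not just the sectional curvatures \(K(e_i,e_j)\) in coordinate planes. For this reason I state flatness as vanishing of the full tensor \(R_{ijkl}\). The tensor vanishes iff its components in the coordinate frame \(\partial_i\) vanish, because that frame is a basis even though it is not orthonormal.

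No global issue arises, because both flatness and the rank condition are pointwise. The regularity \(Z\in C^2\) is exactly enough: \(h\) is continuous, and the Gauss formula holds in the \(C^2\) setting. I expect no real obstacle. The main step is simply recording the Gauss equation in graph coordinates together with the elementary linear-algebra fact about minors.
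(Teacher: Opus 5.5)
Your proposal is correct and follows the paper's own route: the induced metric $g_{ij}=\delta_{ij}+Z_iZ_j$, the second fundamental form $Z_{ij}/\sqrt{1+|\nabla Z|^2}$, the Gauss equation writing $R_{ijkl}$ as the rescaled $2\times2$ minors of $D^2Z$, and the minor characterization of $\operatorname{rank}\le1$. Your remarks on mixed (non-principal) minors and on the coordinate frame being a basis make explicit points the paper leaves implicit; note only that for $Z\in C^2$ the induced metric is merely $C^1$, so here, as in the paper, the curvature is read through the Gauss equation rather than computed intrinsically.
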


\begin{proof}

Writing
\[
Z_i=\frac{\partial Z}{\partial x_i},
\qquad
Z_{ij}=\frac{\partial^2Z}{\partial x_i\partial x_j},
\]
the induced metric of the graph hypersurface is
\[
g_{ij}
=
\delta_{ij}
+
Z_iZ_j,
\]
while the coefficients of the second fundamental form are
\[
b_{ij}
=
\frac{Z_{ij}}
{\sqrt{1+|\nabla Z|^2}}.
\]

Since the ambient space
\(
\mathbb R^{n+1}
\)
is Euclidean,  the Gauss equation gives the components
\(R_{ijkl}\)
of the Riemann curvature tensor of the induced metric:
\[
R_{ijkl}
=
b_{ik}b_{jl}
-
b_{il}b_{jk}.
\]

Therefore,
\begin{equation}\label{Rijkl}
R_{ijkl}
=
\frac{
Z_{ik}Z_{jl}
-
Z_{il}Z_{jk}
}
{
1+|\nabla Z|^2
}.
\end{equation}

By definition, \(M\) is flat if and only if
\(
R_{ijkl}=0
\)
for all indices \(i,j,k,l\). In view of \eqref{Rijkl},
this is equivalent to
\[
Z_{ik}Z_{jl}
=
Z_{il}Z_{jk}.
\]

The above identities are equivalent to the vanishing of all
\(2\times2\) minors of the Hessian matrix, and therefore
\[
\operatorname{rank}D^2Z\le1.
\]
This completes the proof.

\end{proof}

The next result shows that the rank-one Hessian condition is stable under affine restrictions.

\begin{lemma}\label{lem:affine}

Let
\(
U\subset\mathbb R^{n}
\)
be open,
let
\(
Z\in C^{2}(U),
\)
and let
\(
V\subset\mathbb R^{m}
\)
be open.
Suppose that
\(
L:V\longrightarrow U
\)
is the restriction of an affine map.
If
\[
\operatorname{rank}D^{2}Z(x)\le1,
\qquad
x\in U,
\]
then
\[
\operatorname{rank}
D^{2}(Z\circ L)(y)
\le1,
\qquad
y\in V.
\]

\end{lemma}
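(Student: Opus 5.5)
Write the affine map as $L(y)=x_0+By$, where $x_0\in\mathbb R^n$ and $B$ is a constant real $n\times m$ matrix. Since $L$ is affine, its second derivatives vanish, so the chain rule gives the Hessian of the composition directly. First, $D(Z\circ L)(y)=DZ(L(y))\,B$. Differentiating once more, and using that $B$ does not depend on $y$, yields
\[
D^{2}(Z\circ L)(y)=B^{T}\,D^{2}Z(L(y))\,B,\qquad y\in V.
\]
Here $Z\circ L\in C^2(V)$ because $L$ is smooth, $L(V)\subset U$, and $Z\in C^2(U)$. In coordinates this is $\partial_a\partial_b(Z\circ L)=\sum_{i,j}B_{ia}B_{jb}Z_{ij}(L(y))$. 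The term involving second derivatives of $L$ is absent, and this is the only point where affineness is used.

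The rank bound then follows from the elementary inequality $\operatorname{rank}(PQ)\le\min(\operatorname{rank}P,\operatorname{rank}Q)$:
\[
\operatorname{rank}D^{2}(Z\circ L)(y)\le\operatorname{rank}D^{2}Z(L(y))\le 1.
\]
One can say this more concretely. If $D^2Z(L(y))=\lambda\,w w^{T}$ for some $\lambda\in\mathbb R$ and $w\in\mathbb R^n$, which is the general form of a symmetric matrix of rank at most one, then $D^2(Z\circ L)(y)=\lambda\,(B^Tw)(B^Tw)^T$, and this again has rank at most one.

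None of these steps is a real obstacle. The only thing to state carefully is that the hypothesis is applied at the point $L(y)\in U$. By Proposition~\ref{prop:flatness}, the conclusion is equivalent to saying that the graph of $Z\circ L$ is flat whenever the graph of $Z$ is.
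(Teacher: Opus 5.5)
Your proof is correct and follows the paper's argument: the chain rule, with the second derivative of the affine map vanishing, gives $D^{2}(Z\circ L)(y)=B^{T}D^{2}Z(L(y))B$, and the rank inequality for matrix products finishes it. Your extra remark, writing the rank-at-most-one symmetric Hessian as $\lambda ww^{T}$, is a valid concrete form of the same bound.
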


\begin{proof}

Write
\[
L(y)=Ay+b,
\]
where
\(
A\in\mathbb R^{n\times m}
\)
is constant and
\(
b\in\mathbb R^{n}.
\)

Since \(L\) is affine, its second differential vanishes.
Hence, by the chain rule for second derivatives,
\[
D^{2}(Z\circ L)(y)
=
A^{T}
D^{2}Z(L(y))
A.
\]

Since
\[
\operatorname{rank}(A^{T}BA)
\le
\operatorname{rank}(B)
\]
for every matrix \(B\), it follows that
\[
\operatorname{rank}
D^{2}(Z\circ L)(y)
\le
\operatorname{rank}
D^{2}Z(L(y))
\le1.
\]

This proves the lemma.

\end{proof}

As an immediate consequence, for every affine map $L:V\subset\mathbb R^2\to U$, if the graph of $Z$ is flat, then the graph surface of the restricted function $Z\circ L$ is flat.
Together with Proposition~\ref{prop:flatness} and
Theorem~\ref{thm:logrepresentation}, this affine restriction principle
provides the basic reduction used in the next section.

\section{Higher-Dimensional Rigidity}\label{S5}
\label{sec:rigidity}

Throughout the section, suppose
\(
n>2
\)
and let
\[
Y(x)
=
A\prod_{\alpha=1}^{n}
x_\alpha^{f_\alpha(x)}
\]
be a variable-returns-to-scale Cobb--Douglas production function defined
on a connected open set
\(
U\subset\mathbb R_+^n.
\)

In the logarithmic coordinates introduced in Section~3, we have
\[
u_i=\ln\frac{x_i}{x_n},
\qquad
i=1,\ldots,n-1,
\qquad
v=\ln x_n,
\]
Theorem~\ref{thm:logrepresentation} gives the representation
\eqref{rigidity-log-form},
where
\(
u=(u_1,\ldots,u_{n-1})\in D
\)
and
\(
R(u)
\)
is the logarithmic representation \eqref{functionR} of the local return-to-scale index.

Our objective is to prove that
\(
\nabla R=0
\)
on the logarithmic ratio domain \(D\) defined in Section 3, provided that the hypersurface associated to the production function $Y$ is flat.

\subsection{Two-dimensional logarithmic rigidity}\label{S5.1}

We recall the two-dimensional analytic rigidity mechanism established
in \cite{Previous2D}. We state it here in a form adapted to the affine
restrictions used in the higher-dimensional argument.

\begin{lemma}
\label{lem:2d-log-rigidity}
Let
\(
V\subset\mathbb R_+^2
\)
be a connected open set, and let
\(
G\in C^2(V)
\)
be positive. Define
\[
I
=
\left\{
\ln\frac{x}{y}:(x,y)\in V
\right\}.
\]
Suppose that there exist functions
\(
\mathcal H,\rho\in C^2(I)
\)
such that
\[
\ln G(x,y)
=
\mathcal H(s)+\rho(s)w,
\qquad
s=\ln\frac{x}{y},
\quad
w=\ln y.
\]
If
\(
\det D^2G=0
\)
on $V$, then $\rho$ is constant on $I$.
\end{lemma}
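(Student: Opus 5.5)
The plan is to rewrite the Monge--Amp\`ere condition $\det D^2G=0$ in logarithmic coordinates and expand it as a polynomial in the scale variable $w$, then read off its leading coefficient. Set $p=\ln x$ and $q=\ln y$, so that $s=p-q$ and $w=q$. Since
\[
G_{xx}=\frac{G_{pp}-G_p}{x^2},\qquad G_{yy}=\frac{G_{qq}-G_q}{y^2},\qquad G_{xy}=\frac{G_{pq}}{xy},
\]
the hypothesis becomes $(G_{pp}-G_p)(G_{qq}-G_q)=G_{pq}^2$. Put $\varphi=\ln G=\mathcal H(p-q)+\rho(p-q)\,q$ and $G=e^{\varphi}$, and divide by $e^{2\varphi}$. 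The condition then reads $E=0$, where
\[
E=(\varphi_{pp}+\varphi_p^2-\varphi_p)(\varphi_{qq}+\varphi_q^2-\varphi_q)-(\varphi_{pq}+\varphi_p\varphi_q)^2 .
\]

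Next I compute the derivatives in terms of $s$ and $w$. Write $a=\rho'(s)$, $\alpha=\varphi_p=\mathcal H'(s)+a w$ and $\beta=\varphi_{pp}=\mathcal H''(s)+\rho''(s)w$. Then
\[
\varphi_q=\rho-\alpha,\qquad \varphi_{pq}=a-\beta,\qquad \varphi_{qq}=\beta-2a .
\]
This step needs only $C^2$ regularity, since $\rho''$ and $\mathcal H''$ appear only as coefficients.

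Substituting, the first factor is $\alpha^2+P$ with $P=\beta-\alpha$. The second factor is $\alpha^2+S$ with $S=\beta+\alpha-2a+\rho^2-\rho-2\rho\alpha$. The squared term is $(\alpha^2-\rho\alpha+\beta-a)^2$. The $\alpha^4$ terms cancel, and the $\alpha^3$ terms cancel as well. What remains is
\[
E=\alpha^2(\rho^2-\rho)+PS-(\beta-a-\rho\alpha)^2 .
\]
This is a polynomial of degree at most $2$ in $w$, with coefficients depending only on $s$.

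For fixed $s\in I$, the set of $w$ with $(e^{s+w},e^{w})\in V$ is a nonempty open set, because $V$ is open. So $E\equiv 0$ forces every coefficient to vanish; in particular the $w^2$ coefficient vanishes. Collecting the $w^2$ terms, where $\alpha\sim aw$ and $\beta\sim\rho''w$, gives
\[
a^2(\rho^2-\rho)+(\rho''-a)(\rho''+a-2\rho a)-(\rho''-\rho a)^2 .
\]
I expect the $\rho''$ terms to cancel, leaving $\rho'(s)^2\,(\rho(s)-1)=0$ for all $s\in I$.

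To conclude, note that $I$ is an open interval, being the continuous image of the connected open set $V$ under a map that is open. Suppose $\rho'(s_0)\neq0$. Then $\rho$ is strictly monotone near $s_0$, so $\rho\neq1$ on a punctured neighbourhood of $s_0$. The identity above then forces $\rho'=0$ there, and by continuity $\rho'(s_0)=0$, a contradiction. Hence $\rho'\equiv0$ on $I$, and $\rho$ is constant.

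The main obstacle is the bookkeeping in the expansion of $E$. It must be checked carefully that the quartic and cubic parts in $w$ really cancel, and that $\rho''$ drops out of the quadratic coefficient. If the quadratic coefficient turned out to involve $\rho''$, the fallback would be to combine it with the linear and constant coefficients and eliminate $\mathcal H$.
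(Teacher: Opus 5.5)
Your proposal is correct and follows the paper's route. Your $E$ is exactly the paper's quadratic $P_2w^2+P_1w+P_0$, which the paper only cites from earlier work, and your hedged cancellation does hold: $(\rho''-a)(\rho''+a-2\rho a)-(\rho''-\rho a)^2=-a^2(\rho-1)^2$, so the $w^2$-coefficient is $(\rho-1)(\rho')^2=P_2$. Your closing local-contradiction argument at a point where $\rho'\neq 0$ is a valid substitute for the paper's argument on the connected components of $\{\rho\neq 1\}$.
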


\begin{proof}
Set $g=\ln G$. Since
\[
D^2_{x,y}G
=
e^g\left(
D^2_{x,y}g+\nabla_{x,y}g\,\nabla_{x,y}g^T
\right),
\]
the condition $\det D^2G=0$ is equivalent to
\[
\det\left(
D^2_{x,y}g+\nabla_{x,y}g\,\nabla_{x,y}g^T
\right)=0.
\]
For the representation
\[
g=\mathcal H(s)+\rho(s)w,
\qquad
s=\ln\frac{x}{y},
\quad
w=\ln y,
\]
the computation carried out in \cite{Previous2D} gives
\[
\det\left(
D^2_{x,y}g+\nabla_{x,y}g\,\nabla_{x,y}g^T
\right)
=
e^{-2s-4w}
\left(
P_2(s)w^2+P_1(s)w+P_0(s)
\right),
\]
where
\[
P_2=(\rho-1)(\rho')^2,
\]
\[
P_1
=
(\rho-1)
\left[
\rho\rho''
+(2\mathcal H'-\rho)\rho'
-2(\rho')^2
\right],
\]
and
\[
P_0
=
(\rho-1)
\left[
(\mathcal H')^2
+\rho(\mathcal H''-\mathcal H')
-2\mathcal H'\rho'
\right]
-(\rho')^2.
\]

Let
\[
\Phi(x,y)
=
\left(\ln\frac{x}{y},\ln y\right).
\]
Since $\Phi$ is a diffeomorphism of $\mathbb R_+^2$ onto
$\mathbb R^2$, the set $\Phi(V)$ is open. Thus, for each $s\in I$,
the set
\[
W_s
=
\{w\in\mathbb R:(s,w)\in\Phi(V)\}
\]
is nonempty and open, and hence contains a nonempty open interval.
The polynomial
\[
P_2(s)w^2+P_1(s)w+P_0(s)
\]
therefore vanishes on an open interval and must be identically zero.
In particular,
\[
(\rho(s)-1)(\rho'(s))^2=0
\]
for every $s\in I$.

Since $V$ is connected, $I$ is an interval. Set
\[
E=\{s\in I:\rho(s)\neq1\}.
\]
Then $\rho'=0$ on $E$, so $\rho$ is constant on each connected
component of $E$. If a nonempty component were a proper subset of
$I$, continuity at a boundary point in $I$ would force its constant
value to be $1$, contradicting the definition of $E$. Hence either
$E=\varnothing$ or $E=I$. In the first case $\rho\equiv1$, while in
the second $\rho'\equiv0$ on $I$. Thus $\rho$ is constant on $I$.
\end{proof}

\subsection{Restrictions to coordinate lines}\label{S5.2}

We record an elementary observation that will be used to identify the
derivatives obtained from the two-dimensional restrictions with the
partial derivatives of \(R\). Here and below, "$\cdot$" denotes the standard Euclidean inner product.

\begin{lemma}
\label{lem:coordinate-lines}

Let
\(
D\subset\mathbb R^{n-1}
\)
be open, let
\(
R\in C^2(D),
\)
and fix
\(
u^0=(u_1^0,\ldots,u_{n-1}^0)\in D
\)
and
\(
i\in\{1,\ldots,n-1\}.
\)
Define
\[
\gamma_{i,u^0}(s)
=
(u_1^0,\ldots,u_{i-1}^0,s,
u_{i+1}^0,\ldots,u_{n-1}^0),
\qquad s\in\mathbb R.
\]

Let \(J_{i,u^0}\) be the connected component containing \(u_i^0\) of
the open set
\[
\gamma_{i,u^0}^{-1}(D)
=
\{\,s\in\mathbb R:\gamma_{i,u^0}(s)\in D\,\}.
\]

Then \(J_{i,u^0}\) is an open interval, and the function
\[
R_{i,u^0}
:=
R\circ\gamma_{i,u^0}
\]
belongs to \(C^2(J_{i,u^0})\). Moreover,
\[
R_{i,u^0}'(s)
=
\frac{\partial R}{\partial u_i}
\left(
\gamma_{i,u^0}(s)
\right)
\]
for every \(s\in J_{i,u^0}\). In particular,
\[
R_{i,u^0}'(u_i^0)
=
\frac{\partial R}{\partial u_i}(u^0).
\]

\end{lemma}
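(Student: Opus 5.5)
The plan is to check three things in turn, using only elementary facts about open sets in Euclidean space and the one-variable chain rule.

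First, \(J_{i,u^0}\) is an open interval containing \(u_i^0\). The map \(\gamma_{i,u^0}:\mathbb R\to\mathbb R^{n-1}\) is affine, so in particular continuous. Since \(D\) is open, the preimage \(\gamma_{i,u^0}^{-1}(D)\) is an open subset of \(\mathbb R\). It is nonempty because \(\gamma_{i,u^0}(u_i^0)=u^0\in D\). Every open subset of \(\mathbb R\) is a disjoint union of open intervals, and these intervals are exactly its connected components. Hence the component \(J_{i,u^0}\) through \(u_i^0\) is an open interval.

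Second, \(R_{i,u^0}\in C^2(J_{i,u^0})\) and its derivative is the stated partial derivative. On \(J_{i,u^0}\) the function \(R_{i,u^0}\) is the composition of the \(C^2\) function \(R\) with the smooth affine curve \(\gamma_{i,u^0}\), so it is \(C^2\). The velocity of the curve is constant, \(\gamma_{i,u^0}'(s)=e_i\), the \(i\)-th standard basis vector of \(\mathbb R^{n-1}\). The chain rule then gives
\[
R_{i,u^0}'(s)=\nabla R(\gamma_{i,u^0}(s))\cdot e_i=\frac{\partial R}{\partial u_i}(\gamma_{i,u^0}(s)).
\]
Evaluating at \(s=u_i^0\), where \(\gamma_{i,u^0}(u_i^0)=u^0\), gives the final identity.

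No step is a genuine obstacle. The only point needing care is to say explicitly that the components of an open subset of \(\mathbb R\) are open intervals, so that \(J_{i,u^0}\) is a legitimate one-dimensional domain. This is what later allows the argument to compare \(R_{i,u^0}\) with the function \(\rho\) produced by Lemma~\ref{lem:2d-log-rigidity}, which is defined on an interval.
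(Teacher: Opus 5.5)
Your proposal is correct and follows the paper's proof essentially step by step: openness of $\gamma_{i,u^0}^{-1}(D)$ by continuity, the component through $u_i^0$ being an open interval, $C^2$ regularity of the composition, and the chain rule with $\gamma_{i,u^0}'(s)=e_i$. Your explicit remark that the connected components of an open subset of $\mathbb R$ are open intervals makes precise a step the paper states without justification.
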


\begin{proof}

Since
\[
\gamma_{i,u^0}(u_i^0)=u^0\in D,
\]
the set
\(
\gamma_{i,u^0}^{-1}(D)
\)
contains \(u_i^0\). Moreover, since \(D\) is open and
\(\gamma_{i,u^0}\) is continuous,
\(
\gamma_{i,u^0}^{-1}(D)
\)
is open in \(\mathbb R\). Therefore, its connected component
\(J_{i,u^0}\) containing \(u_i^0\) is an open interval.

Since
\(
R\in C^2(D)
\)
and
\(
\gamma_{i,u^0}
\)
is smooth,
\[
R_{i,u^0}
=
R\circ\gamma_{i,u^0}
\]
belongs to
\(
C^2(J_{i,u^0}).
\)

By the chain rule,
\[
R_{i,u^0}'(s)
=
\nabla R\!\left(\gamma_{i,u^0}(s)\right)\cdot
\gamma_{i,u^0}'(s).
\]

Since
\[
\gamma_{i,u^0}'(s)=e_i,
\]
where
\(
e_i
\)
denotes the \(i\)-th standard basis vector of
\(
\mathbb R^{n-1},
\)
we obtain
\[
R_{i,u^0}'(s)
=
\nabla R\!\left(\gamma_{i,u^0}(s)\right)\cdot e_i
=
\frac{\partial R}{\partial u_i}
\left(
\gamma_{i,u^0}(s)
\right).
\]

Evaluating at
\(
s=u_i^0
\)
and using
\[
\gamma_{i,u^0}(u_i^0)=u^0,
\]
we obtain
\[
R_{i,u^0}'(u_i^0)
=
\frac{\partial R}{\partial u_i}(u^0).
\]

\end{proof}

\subsection{Affine logarithmic sections}

Fix
\(
u^0=(u_1^0,\ldots,u_{n-1}^0)\in D
\)
and
\(
i\in\{1,\ldots,n-1\}.
\)
For every
\(
j\in\{1,\ldots,n-1\}\setminus\{i\},
\)
set
\(
c_j=e^{u_j^0}.
\)
Define now the affine map
\(
L_{i,u^0}:\mathbb R^2\longrightarrow\mathbb R^n
\)
by
\[
L_{i,u^0}(x,y)
=
(z_1,\ldots,z_n),
\]
where
\(
z_i=x,
\)
\(
z_n=y,
\)
and
\(
z_j=c_jy
\)
$(j\neq i,n)$.

Let
\(
V_{i,u^0}
=
L_{i,u^0}^{-1}(U)\cap\mathbb R_+^2
\)
and define
\[
Y_{i,u^0}
=
Y\circ L_{i,u^0}.
\]

Since \(U\) is open and \(L_{i,u^0}\) is continuous,
\(V_{i,u^0}\) is open.

\begin{proposition}\label{prop:affine-log-sections}
Suppose that the graph hypersurface of $Y$ is flat. Fix $u^0\in D$, $i\in\{1,\ldots,n-1\}$, and choose $v^0$ such that $(u^0,v^0)\in\Omega$. Set
\(
p^0=\left(e^{u_i^0+v^0},e^{v^0}\right)\in V_{i,u^0}.
\)
Then there exists a connected open neighborhood $W_0$ of $p^0$, with $W_0\subset V_{i,u^0}$, such that
\[
I_{W_0}:=\left\{\ln\frac{x}{y}:(x,y)\in W_0\right\}\subset J_{i,u^0}.
\]
On $W_0$ one has
\[
\det D^2Y_{i,u^0}=0
\]
and
\begin{equation}\label{section-log-representation}
\ln Y_{i,u^0}(x,y)=H_{i,u^0}(s)+R_{i,u^0}(s)w,
\end{equation}
where $s=\ln(x/y)$, $w=\ln y$, $H_{i,u^0}=H\circ\gamma_{i,u^0}$ and $R_{i,u^0}=R\circ\gamma_{i,u^0}$ on $I_{W_0}$.
\end{proposition}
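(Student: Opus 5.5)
First I check that $p^0\in V_{i,u^0}$. Since $c_j=e^{u_j^0}$, we have
\[
L_{i,u^0}(p^0)=\left(e^{u_1^0+v^0},\ldots,e^{u_{n-1}^0+v^0},e^{v^0}\right)=x(u^0,v^0)\in U .
\]
The argument $p^0$ lies in $\mathbb R_+^2$, so $p^0\in V_{i,u^0}$.

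Next I choose $W_0$. The map $(x,y)\mapsto \ln(x/y)$ is continuous on $\mathbb R_+^2$ and sends $p^0$ to $u_i^0$. The set $J_{i,u^0}$ is an open interval containing $u_i^0$ by Lemma~\ref{lem:coordinate-lines}. The set $V_{i,u^0}$ is open. Hence a small open ball $W_0$ around $p^0$ satisfies $W_0\subset V_{i,u^0}$ and $I_{W_0}\subset J_{i,u^0}$. Balls are connected, so $W_0$ is a connected open neighborhood.

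The Hessian condition follows from the earlier results. By Proposition~\ref{prop:flatness}, $\operatorname{rank}D^2Y\le 1$ on $U$. The restriction of $L_{i,u^0}$ to $W_0$ is affine with values in $U$, so Lemma~\ref{lem:affine} gives $\operatorname{rank}D^2Y_{i,u^0}\le1$ on $W_0$. A $2\times2$ matrix of rank at most one has zero determinant, so $\det D^2Y_{i,u^0}=0$ on $W_0$.

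For the representation \eqref{section-log-representation}, fix $(x,y)\in W_0$ and write $z=L_{i,u^0}(x,y)\in U$. I compute its logarithmic coordinates:
\[
\ln\frac{z_i}{z_n}=\ln\frac{x}{y}=s,\qquad \ln\frac{z_j}{z_n}=\ln c_j=u_j^0\quad (j\neq i),\qquad \ln z_n=\ln y=w .
\]
So the ratio coordinate of $z$ is exactly $\gamma_{i,u^0}(s)$, and its scale variable is $w$. Since $z\in U$, the point $(\gamma_{i,u^0}(s),w)$ lies in $\Omega$, and therefore $\gamma_{i,u^0}(s)\in D$. Theorem~\ref{thm:logrepresentation} then gives
\[
\ln Y_{i,u^0}(x,y)=\ln Y(z)=H(\gamma_{i,u^0}(s))+R(\gamma_{i,u^0}(s))\,w ,
\]
which is \eqref{section-log-representation}.

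The only point needing care is that the compositions $H\circ\gamma_{i,u^0}$ and $R\circ\gamma_{i,u^0}$ are defined and of class $C^2$ on $I_{W_0}$. The inclusion $I_{W_0}\subset J_{i,u^0}$ handles this. Lemma~\ref{lem:coordinate-lines} applies verbatim to $H$ as well as to $R$, since $H\in C^2(D)$. So both compositions are $C^2$ on $J_{i,u^0}$ and hence on $I_{W_0}$, which is what the later application of Lemma~\ref{lem:2d-log-rigidity} will need.
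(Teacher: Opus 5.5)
Your proposal is correct and follows the paper's proof essentially step for step. Flatness gives $\operatorname{rank}D^2Y\le1$, the affine restriction lemma transfers this to $Y_{i,u^0}$, continuity of $(x,y)\mapsto\ln(x/y)$ yields $W_0$ with $I_{W_0}\subset J_{i,u^0}$, and identifying the logarithmic coordinates of $L_{i,u^0}(x,y)$ as $(\gamma_{i,u^0}(s),w)$ lets the logarithmic representation theorem deliver the formula. Your extra checks, that $L_{i,u^0}(p^0)=x(u^0,v^0)\in U$ and that $H\circ\gamma_{i,u^0}$ and $R\circ\gamma_{i,u^0}$ are $C^2$ on $I_{W_0}$, make explicit details the paper leaves implicit.
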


\begin{proof}
By Proposition~\ref{prop:flatness}, $\operatorname{rank}D^2Y\le1$ on $U$. Since $L_{i,u^0}$ is affine, Lemma~\ref{lem:affine} gives
\[
\operatorname{rank}D^2Y_{i,u^0}\le1
\]
on $V_{i,u^0}$, and hence $\det D^2Y_{i,u^0}=0$ there.

Because $J_{i,u^0}$ is an open interval containing $u_i^0$, choose $\varepsilon>0$ such that
\(
(u_i^0-\varepsilon,u_i^0+\varepsilon)\subset J_{i,u^0}.
\)
The map $(x,y)\mapsto\ln(x/y)$ is continuous and takes the value $u_i^0$ at $p^0$. Since $V_{i,u^0}$ is open, there exists a connected open neighborhood $W_0$ of $p^0$, contained in $V_{i,u^0}$, for which
\[
I_{W_0}\subset(u_i^0-\varepsilon,u_i^0+\varepsilon)\subset J_{i,u^0}.
\]

For $(x,y)\in W_0$, the point $L_{i,u^0}(x,y)$ has logarithmic ratio coordinates $\gamma_{i,u^0}(s)$ and logarithmic scale coordinate $w$. Therefore Theorem~\ref{thm:logrepresentation} gives
\[
\ln Y_{i,u^0}(x,y)
=H(\gamma_{i,u^0}(s))+R(\gamma_{i,u^0}(s))w,
\]
which is exactly \eqref{section-log-representation}. The inclusion $I_{W_0}\subset J_{i,u^0}$ guarantees that both compositions are defined on the whole ratio set used here.
\end{proof}

\subsection{The rigidity theorem}\label{S5.4}

Now, we are able to establish the main result. The proof proceeds in
three steps. First, we restrict the production function to suitable
affine two-dimensional subspaces of its domain. Second, Proposition
\ref{prop:affine-log-sections} shows that each restriction admits the
same logarithmic representation as in the two-dimensional setting.
Finally, the two-dimensional rigidity lemma implies that every
directional derivative of the return-to-scale function \(R\) along the
coordinate lines vanishes, forcing \(R\) to be constant on the entire
logarithmic ratio domain.

\begin{theorem}
\label{thm:main-rigidity}

Let
\[
Y(x)
=
A\prod_{\alpha=1}^{n}
x_\alpha^{f_\alpha(x)},
\qquad
n>2,
\]
be a variable-returns-to-scale Cobb--Douglas production function defined
on a connected open set
\(
U\subset\mathbb R_+^n.
\)
If the graph hypersurface of \(Y\) is flat, then there exists a constant
\(
\beta\in\mathbb R
\)
such that
\[
\sum_{\alpha=1}^{n}f_\alpha(x)
=
\beta
\]
for every \(x\in U\).

\end{theorem}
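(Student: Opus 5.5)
The plan is to show $\nabla R=0$ on the logarithmic ratio domain $D$. Since $D$ is connected, this makes $R$ constant, say $R\equiv\beta$. Then \eqref{return-index-log} gives $r(x)=\sum_\alpha f_\alpha(x)=\beta$ for every $x\in U$.

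Fix $u^0\in D$ and an index $i\in\{1,\ldots,n-1\}$, and choose $v^0$ with $(u^0,v^0)\in\Omega$. The first step is to apply Proposition~\ref{prop:affine-log-sections}. It gives a connected open neighborhood $W_0\subset V_{i,u^0}$ of $p^0$ with $I_{W_0}\subset J_{i,u^0}$. On $W_0$ we have $\det D^2Y_{i,u^0}=0$ and
\[
\ln Y_{i,u^0}=H_{i,u^0}(s)+R_{i,u^0}(s)w .
\]
The second step is to check the hypotheses of Lemma~\ref{lem:2d-log-rigidity} with $V=W_0$, $G=Y_{i,u^0}|_{W_0}$, $I=I_{W_0}$, $\mathcal H=H_{i,u^0}$ and $\rho=R_{i,u^0}$:
\begin{itemize}
\item $G$ is positive and of class $C^2$, being a composition of $Y$ with an affine map.
\item $H_{i,u^0}$ and $R_{i,u^0}$ restricted to $I_{W_0}$ are $C^2$. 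For $R$ this is Lemma~\ref{lem:coordinate-lines}. For $H$ the same argument applies, since $H\in C^2(D)$ by Theorem~\ref{thm:logrepresentation}.
\item $I_{W_0}$ is an open interval containing $u_i^0$. It is the continuous open image of the connected open set $W_0$ under $(x,y)\mapsto\ln(x/y)$, and $p^0$ maps to $u_i^0$.
\end{itemize}
The lemma then says that $R_{i,u^0}$ is constant on $I_{W_0}$. Hence $R_{i,u^0}'(u_i^0)=0$, and by Lemma~\ref{lem:coordinate-lines} this is $\partial R/\partial u_i(u^0)=0$.

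Since $u^0\in D$ and $i$ were arbitrary, all partial derivatives of $R$ vanish on $D$, so $\nabla R\equiv0$. Because $D$ is open and connected, $R$ is constant there. This concludes the proof via \eqref{return-index-log}. Proposition~\ref{prop:homogeneous-characterization} then adds that $Y$ is homogeneous of degree $\beta$.

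The main obstacle is already absorbed into the earlier results, namely the polynomial-in-$w$ argument of Lemma~\ref{lem:2d-log-rigidity}. What remains delicate here is verifying that the restricted function really has the two-variable logarithmic form. For $(x,y)\in W_0$ the point $L_{i,u^0}(x,y)$ has $z_j/z_n=c_j=e^{u_j^0}$ for $j\ne i,n$, $\ln(z_i/z_n)=s$ and $\ln z_n=w$. So its logarithmic coordinates are $(\gamma_{i,u^0}(s),w)$, and this point lies in $\Omega$ because $L_{i,u^0}(x,y)\in U$. I would state this identification explicitly. The rest is bookkeeping of domains, which Proposition~\ref{prop:affine-log-sections} has handled.
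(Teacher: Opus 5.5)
Your proposal is correct and matches the paper's proof step for step. Proposition~\ref{prop:affine-log-sections} gives the two-variable logarithmic form on $W_0$. Lemma~\ref{lem:2d-log-rigidity} then makes $R_{i,u^0}$ constant on $I_{W_0}$, and Lemma~\ref{lem:coordinate-lines} turns this into $\partial R/\partial u_i(u^0)=0$; connectedness of $D$ and \eqref{return-index-log} finish the argument. Your additional checks (positivity and $C^2$ regularity of $G$, $I_{W_0}$ being an open interval, and the identification of the logarithmic coordinates of $L_{i,u^0}(x,y)$ as $(\gamma_{i,u^0}(s),w)$) only spell out what the paper leaves inside that proposition and its phrase ``all hypotheses are satisfied.''
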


\begin{proof}
It is enough to prove that $\partial R/\partial u_i=0$ throughout $D$ for every $i=1,\ldots,n-1$. Fix $u^0\in D$ and $i\in\{1,\ldots,n-1\}$. Choose $v^0$ with $(u^0,v^0)\in\Omega$. By Proposition~\ref{prop:affine-log-sections}, there is a connected open neighborhood $W_0\subset V_{i,u^0}$ of
\(
p^0=\left(e^{u_i^0+v^0},e^{v^0}\right)
\)
such that $I_{W_0}\subset J_{i,u^0}$, $\det D^2Y_{i,u^0}=0$ on $W_0$, and
\[
\ln Y_{i,u^0}(x,y)=H_{i,u^0}(s)+R_{i,u^0}(s)w.
\]
All hypotheses of Lemma~\ref{lem:2d-log-rigidity} are therefore satisfied on $W_0$. Hence $R_{i,u^0}$ is constant on $I_{W_0}$. Since
\[
u_i^0=\ln\frac{e^{u_i^0+v^0}}{e^{v^0}}\in I_{W_0},
\]
we obtain $R_{i,u^0}'(u_i^0)=0$. Lemma~\ref{lem:coordinate-lines} now yields
\[
\frac{\partial R}{\partial u_i}(u^0)=0.
\]
Because $u^0$ and $i$ were arbitrary, $\nabla R=0$ on $D$. The domain $D$ is connected, so $R\equiv\beta$ for some constant $\beta$. Finally, by \eqref{return-index-log},
\[
\sum_{\alpha=1}^n f_\alpha(x)=r(x)=\beta
\]
for every $x\in U$.
\end{proof}

\begin{corollary}
\label{cor:homogeneity}

Every flat variable-returns-to-scale Cobb--Douglas production function
with $n>2$ inputs is homogeneous.

\end{corollary}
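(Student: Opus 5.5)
The corollary follows by combining Theorem~\ref{thm:main-rigidity} with Proposition~\ref{prop:homogeneous-characterization}.

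Let $Y$ be a variable-returns-to-scale Cobb--Douglas production function with $n>2$ inputs, defined on a connected open set $U\subset\mathbb R_+^n$, and assume its graph hypersurface is flat. Theorem~\ref{thm:main-rigidity} gives a constant $\beta\in\mathbb R$ with
\[
\sum_{\alpha=1}^n f_\alpha(x)=\beta, \qquad x\in U.
\]
Hence the local return-to-scale index $r$ defined in \eqref{r-t-s-index} is constant on $U$.

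The ``if'' direction of Proposition~\ref{prop:homogeneous-characterization} now applies. Since each $f_\alpha$ is homogeneous of degree zero, for $x,tx\in U$ and $t>0$ we get
\[
Y(tx)=t^{r(x)}Y(x)=t^{\beta}Y(x).
\]
Thus $Y$ is homogeneous of degree $\beta$, in the relative sense fixed in Section~\ref{S2}.

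There is no real obstacle at this stage. All the analytic work is contained in Theorem~\ref{thm:main-rigidity}, that is, in the affine reduction and the two-dimensional rigidity lemma. The only point to keep in view is that homogeneity is understood relative to $U$: the identity is required only when both $x$ and $tx$ lie in $U$, and this is exactly the form Proposition~\ref{prop:homogeneous-characterization} provides.
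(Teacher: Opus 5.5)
Your proposal is correct and is essentially the paper's proof: Theorem~\ref{thm:main-rigidity} makes the local return-to-scale index constant, and the ``if'' direction of Proposition~\ref{prop:homogeneous-characterization} then gives homogeneity of degree $\beta$. You also rightly note that homogeneity is meant in the relative sense on $U$.
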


\begin{proof}

By Theorem~\ref{thm:main-rigidity}, the local return-to-scale index is
constant. The conclusion follows from
Proposition~\ref{prop:homogeneous-characterization}.

\end{proof}

The Rigidity Theorem shows that flatness eliminates the only
possible obstruction to homogeneity, namely the variability of the local
return-to-scale index.

\section{Higher-Dimensional Homogeneous Classification}
\label{sec:classification}

Theorem~\ref{thm:main-rigidity} reduces the problem considered in this
paper to the homogeneous case: every flat production function
in the present variable-exponent class has a constant local return-to-scale
index and is therefore homogeneous in the relative-domain sense specified
in Section~2. The classification obtained in \cite{CV2013} is the
natural point of departure, but its stated ``if and only if'' degree-one
branch requires a dimensional correction.
For \(n=2\), degree-one
homogeneity indeed implies flatness. For \(n>2\), it does not:
Euler's identity gives only \(D^2F(x)x=0\), which need not force
\(\operatorname{rank}D^2F\le1\). We therefore separate the valid
non-degree-one classification from the exceptional degree-one branch and
give a corrected higher-dimensional formulation. We first record a
sufficient condition under which the two-dimensional converse remains
valid in arbitrary ambient dimension.

\begin{proposition}\label{prop:effective-two}
Let $V\subset\mathbb R^n$ be open and suppose that
\(
 F(x)=G(Ax),
\)
$x\in\mathbb R^n$,
where $A:\mathbb R^n\to\mathbb R^2$ is linear with $\operatorname{rank}A\le2$, $G\in C^2(W)$ on an open set $W\subset\mathbb R^2$, $A(V)\subset W$, and $G$ is homogeneous of degree one in the relative-domain sense. Then the graph of $F$ is flat on $V$.
\end{proposition}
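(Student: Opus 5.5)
By Proposition~\ref{prop:flatness}, it suffices to show that $\operatorname{rank}D^2F(x)\le1$ for every $x\in V$. The plan has three steps: compute the Hessian of $F$ by the chain rule, use degree-one homogeneity of $G$ in two variables to show $\operatorname{rank}D^2G\le1$, and then push this rank bound back through $A$.

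Since $A$ is linear, its second differential vanishes, so the computation in the proof of Lemma~\ref{lem:affine} gives
\[
D^2F(x)=A^{T}\,D^2G(Ax)\,A ,\qquad x\in V .
\]
Consequently $\operatorname{rank}D^2F(x)\le\operatorname{rank}D^2G(Ax)$. Thus everything reduces to showing that $\operatorname{rank}D^2G(p)\le1$ for every $p\in A(V)\subset W$. The hypothesis $\operatorname{rank}A\le2$ is automatic here, since $A$ maps into $\mathbb R^2$; it plays no further role.

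For the two-dimensional bound, fix $p\in W$. Because $W$ is open, $tp\in W$ for all $t$ in an open interval around $1$, and the relative homogeneity gives $G(tp)=tG(p)$ there. I will differentiate this in $p$ to get $\nabla G(tp)=\nabla G(p)$ for $t$ near $1$, valid at all points of a neighbourhood of $p$. Then I will differentiate in $t$ at $t=1$ to obtain the Euler-type identity $D^2G(p)\,p=0$. If $p\neq0$, the $2\times2$ symmetric matrix $D^2G(p)$ has a nonzero kernel vector, so its rank is at most $1$. If $p=0$ lies in $W$, continuity of $D^2G$ gives $\det D^2G(0)=\lim\det D^2G(p)=0$ along nonzero $p\to0$, so again the rank is at most $1$.

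The main point requiring care is the relative-domain notion of homogeneity. The Euler identity must be derived only from dilations $tp$ with $t$ near $1$, not from global dilation invariance of $W$. I also need to justify differentiating $G(tp)=tG(p)$ with respect to $p$ on a small neighbourhood where the identity holds for a common interval of $t$. Openness of $W$ together with a compactness argument on a small ball gives such a uniform interval, and $C^2$ regularity then justifies the differentiation. With these in place, the chain-rule rank bound finishes the proof via Proposition~\ref{prop:flatness}.
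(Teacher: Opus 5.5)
Your proof is correct and follows the paper's route: the chain rule $D^2F(x)=A^{T}D^2G(Ax)A$, then the Euler identity $D^2G(z)z=0$ forcing $\operatorname{rank}D^2G(z)\le1$ for $z\neq0$, then a separate treatment of the origin. The differences are minor. You rightly drop the paper's split into $\operatorname{rank}A\le1$ and $\operatorname{rank}A=2$, and at $z=0$ you use continuity of $\det D^2G$, whereas the paper shows $G$ is linear on a ball around the origin, which gives the stronger but unneeded conclusion $D^2G(0)=0$.
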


\begin{proof}
By Euler's identity for $G$, we have
\[
D^2G(z)\,z=0.
\]
If $\operatorname{rank}A\le1$, then
\[
 \operatorname{rank}D^2F(x)
 =\operatorname{rank}\bigl(A^TD^2G(Ax)A\bigr)
 \le \operatorname{rank}A\le1.
\]
Assume now that $\operatorname{rank}A=2$. If $z=Ax\ne0$, the $2\times2$ Hessian $D^2G(z)$ has the nonzero vector $z$ in its kernel, and hence
\[
 \operatorname{rank}D^2G(z)\le1.
\]
If $z=0$, then $0\in W$. Since $W$ is open, a ball $B$ centered at the origin is contained in $W$. Relative degree-one homogeneity gives $G(0)=0$ and, for $y\in B$ and sufficiently small positive $t$,
\[
 G(ty)=tG(y).
\]
Letting $t\to0^+$ and using differentiability at the origin yields $G(y)=DG(0)\cdot y$ on $B$. Thus $D^2G(0)=0$. Consequently,
\[
 \operatorname{rank}D^2G(Ax)\le1
\]
for every $x\in V$. The chain rule gives
\[
 D^2F(x)=A^TD^2G(Ax)A,
\]
so
\[
 \operatorname{rank}D^2F(x)\le \operatorname{rank}D^2G(Ax)\le1.
\]
Proposition~\ref{prop:flatness} therefore implies that the graph of $F$ is flat.
\end{proof}

\begin{remark}
Proposition~\ref{prop:effective-two} contains the genuine two-input case as a special case, but it also applies in arbitrary ambient dimension whenever the production technology depends effectively on at most two independent linear combinations of the inputs. Thus the linearly homogeneous converse remains valid under this additional effective-dimensional hypothesis. For genuinely higher-dimensional models, Euler's identity supplies only the radial null direction of the Hessian and no longer forces the rank-one condition.
\end{remark}

For a $C^2$ homogeneous function $F$ of degree $\beta$, Euler's identity and its differential give
\begin{equation}\label{euler-hessian}
 x\cdot\nabla F(x)=\beta F(x),\qquad
 D^2F(x)x=(\beta-1)\nabla F(x).
\end{equation}
Before stating the classification, observe that the degree-zero case cannot occur under the nonconstancy assumption if the graph is flat. Indeed, if $\beta=0$ and $\nabla F(p)\ne0$ at some point, then on a small connected neighborhood $V$ of $p$ equation~\eqref{euler-hessian} gives
\[
 D^2F(x)x=-\nabla F(x)\ne0.
\]
Flatness therefore forces $\operatorname{rank}D^2F=1$ on $V$ and
\[
 \operatorname{Im}D^2F(x)=\operatorname{span}\{\nabla F(x)\}.
\]
The normalized-gradient argument used below shows that the direction of $\nabla F$ is constant on $V$, so $\nabla F=q c$ with $c\ne0$ fixed and $q$ nowhere zero. Euler's identity then gives $c\cdot x=0$ on the open set $V$, a contradiction. Hence $\nabla F\equiv0$, and connectedness makes $F$ constant. Thus no degree-zero branch occurs for the nonconstant production functions considered here.

We can now state the homogeneous result in a form that gives a complete classification away from the exceptional degree-one case and a local structural description of that branch.

\begin{theorem}\label{thm:corrected-homogeneous-classification}
Let $U\subset\mathbb R_+^n$ be a connected open set and let $F\in C^2(U)$ be positive, nonconstant, and homogeneous of degree $\beta\ne0$.

\begin{enumerate}
\item If $\beta\ne1$, then the graph of $F$ is flat if and only if there exist a nonzero constant vector $c\in\mathbb R^n$ and a constant $C>0$, with $c\cdot x>0$ on $U$, such that
\begin{equation}\label{power-linear-class}
 F(x)=C(c\cdot x)^\beta.
\end{equation}

\item Let $\beta=1$ and suppose that the graph of $F$ is flat. On the open rank-one set
\[
 U_1=\{x\in U:\operatorname{rank}D^2F(x)=1\},
\]
every point has a neighborhood $V$ on which there exist an interval $I$, a $C^1$ regular curve $a:I\to\mathbb R^n$, and a $C^1$ function $\tau:V\to I$ such that
\begin{equation}\label{envelope-class}
 F(x)=a(\tau(x))\cdot x,
 \qquad
 a'(\tau(x))\cdot x=0.
\end{equation}
On every connected open subset on which $D^2F=0$, the function $F$ is linear.

Conversely, if a $C^2$ degree-one homogeneous function admits the representation \eqref{envelope-class} on an open set $V$, then its graph is flat on $V$. A linear homogeneous function also has a flat graph.
\end{enumerate}
\end{theorem}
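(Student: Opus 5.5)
Everything rests on Proposition~\ref{prop:flatness} (flatness $\iff$ $\operatorname{rank}D^2F\le1$) together with the Euler relations \eqref{euler-hessian}.

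\textbf{Part (1), $\beta\ne1$.} The converse is a direct computation. If $F=C(c\cdot x)^\beta$, then
\[
D^2F=C\beta(\beta-1)(c\cdot x)^{\beta-2}\,cc^T,
\]
which has rank at most one, so the graph is flat.

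For the direct implication, first set $Z=\{x:\nabla F(x)=0\}$. On the interior of $Z$, $F$ is locally constant. Euler's identity $x\cdot\nabla F=\beta F$ with $F>0$ and $\beta\ne0$ then shows that $\nabla F$ never vanishes, so $Z=\varnothing$.

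Next, \eqref{euler-hessian} gives $D^2F\,x=(\beta-1)\nabla F\ne0$, so $\operatorname{rank}D^2F=1$ everywhere. Hence $D^2F=\lambda\,\nabla F\nabla F^T$ for some scalar $\lambda$, because the image of $D^2F$ must contain $\nabla F$.

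The key step is the normalized-gradient argument: show that $N=\nabla F/|\nabla F|$ is locally constant. Since $N$ is only $C^1$, I compute
\[
\partial_k N=\frac{1}{|\nabla F|}\bigl(I-NN^T\bigr)\partial_k\nabla F .
\]
Here $\partial_k\nabla F$ is the $k$-th column of $D^2F$, which is parallel to $\nabla F$, so the projection kills it. Thus $DN=0$ and, by connectedness of $U$, $N\equiv c_0$. Then $\nabla F=q\,c_0$ with $q\ne0$, so $F$ is constant on hyperplanes orthogonal to $c_0$ locally, i.e. $F=\phi(c_0\cdot x)$ locally. Globally I would argue via $dF\wedge d(c_0\cdot x)=0$, or simply work locally and patch by analyticity of the final form.

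Homogeneity then gives $t\phi'(t)=\beta\phi(t)$ with $t=c_0\cdot x$, on an interval where $t\ne0$; indeed $t\ne0$ because $\beta F>0$ forces $q\,(c_0\cdot x)>0$. So $\phi=K|t|^\beta$. Replacing $c_0$ by $-c_0$ if necessary gives $c\cdot x>0$ and $F=C(c\cdot x)^\beta$ locally.

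The local constants $(c,C)$ agree on overlaps: $\nabla F$ determines $c$ up to scale, and the pair $C,c$ can be normalized to $|c|=1$. Connectedness of $U$ then gives a single global formula. This patching, together with avoiding an implicit assumption that level sets are connected, is the main technical obstacle. I would handle it by working locally throughout and using that the normalized data $(N,F/(N\cdot x)^\beta)$ are locally constant, hence constant on connected $U$.

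\textbf{Part (2), $\beta=1$.} On $U_1$, which is open by lower semicontinuity of rank, write $D^2F=\mu\,\xi\xi^T$ with a locally $C^0$ unit vector $\xi$. Euler gives $D^2F\,x=0$, so $\xi\cdot x=0$.

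Consider the Gauss map $a=\nabla F:U_1\to\mathbb R^n$, which is $C^1$ with $Da=D^2F$ of rank one. By the constant rank theorem, locally $a$ factors as $a=\tilde a\circ\tau$, where $\tau:V\to I$ is $C^1$ with nonzero differential and $\tilde a$ is a regular $C^1$ curve. Euler gives $F=\nabla F\cdot x=a(\tau(x))\cdot x$. Differentiating $D^2F\,x=0$, i.e. $a'(\tau)\,(\nabla\tau\cdot x)=0$ with $a'\ne0$, shows $\nabla\tau\cdot x=0$.

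For the envelope condition, I would differentiate $F=a(\tau)\cdot x$:
\[
\nabla F=a(\tau)+\bigl(a'(\tau)\cdot x\bigr)\nabla\tau .
\]
Since $\nabla F=a(\tau)$ and $\nabla\tau\ne0$, this gives $a'(\tau)\cdot x=0$.

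On open sets where $D^2F=0$, $\nabla F$ is constant, and Euler makes $F$ linear.

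For the converse, differentiating the envelope representation gives $\nabla F=a(\tau)$, because the extra term vanishes by $a'\cdot x=0$. Hence $D^2F=a'(\tau)\nabla\tau^T$, which has rank at most one, and Proposition~\ref{prop:flatness} gives flatness. Linear functions have $D^2F=0$ and are therefore flat.
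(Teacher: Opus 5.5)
Your proposal is correct and follows essentially the same route as the paper: Euler's identity plus the normalized-gradient argument in part (1), and the constant-rank factorization $\nabla F=a(\tau)$ combined with Euler's identity in part (2). The only real difference is the last step of part (1). There the paper integrates $\nabla\log F=\nabla F/F=\beta c/(c\cdot x)$ directly on the connected set $U$, which makes your local reduction $F=\phi(c_0\cdot x)$ and the patching you flag as the main obstacle unnecessary. Note also that $c_0\cdot x\neq0$ follows from $\beta F\neq0$, not from $\beta F>0$, which fails when $\beta<0$.
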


\begin{proof}
Suppose first that $\beta\ne1$ and that the graph of $F$ is flat. Since $\beta\ne0$ and $F>0$, Euler's identity gives
\[
 x\cdot\nabla F(x)=\beta F(x)\ne0,
\]
so $\nabla F$ never vanishes. Equation~\eqref{euler-hessian} then shows that $D^2F$ never vanishes. Hence, we have
\(
 \operatorname{rank}D^2F=1
\)
throughout $U$.

Equation~\eqref{euler-hessian} also gives
\[
 \operatorname{Im}D^2F(x)=\operatorname{span}\{\nabla F(x)\}
 \qquad (x\in U).
\]
Define the normalized gradient field
\[
 N(x)=\frac{\nabla F(x)}{\|\nabla F(x)\|}.
\]
For every $v\in\mathbb R^n$, the vector
\[
 D_v(\nabla F)(x)=D^2F(x)v
\]
is parallel to $\nabla F(x)$. Differentiating the normalization therefore gives $D_vN(x)=0$ for every $v$, hence $DN=0$ on $U$. Since $U$ is connected, $N$ is constant. Consequently, there is a fixed nonzero vector $c$ and a nowhere-zero scalar function $q$ such that
\[
 \nabla F=q(x)c.
\]
Euler's identity yields
\[
 q(x)(c\cdot x)=\beta F(x).
\]
In particular, $c\cdot x$ never vanishes on $U$; after replacing $c$ by $-c$ if necessary, we may assume $c\cdot x>0$ throughout $U$. Therefore
\[
 \nabla\log F
 =\frac{\nabla F}{F}
 =\beta\frac{c}{c\cdot x}
 =\nabla\bigl(\beta\log(c\cdot x)\bigr).
\]
Connectedness of $U$ gives
\[
 \log F-\beta\log(c\cdot x)=\log C
\]
for a constant $C>0$, proving~\eqref{power-linear-class}. Conversely, a function of the form~\eqref{power-linear-class} satisfies
\[
 D^2F=C\beta(\beta-1)(c\cdot x)^{\beta-2}cc^T,
\]
and therefore has Hessian rank at most one. Proposition~\ref{prop:flatness} gives flatness.

Finally, let $\beta=1$ and assume flatness. On $U_1$, the
gradient map \(\nabla F\) has constant rank one. After shrinking to a
neighborhood \(V\) of any \(p\in U_1\), the constant-rank theorem implies
that \(\nabla F(V)\) is a one-dimensional embedded \(C^1\) submanifold and
that the gradient map factors through a \(C^1\) submersion
\(\tau:V\to I\). Choosing a regular \(C^1\) parametrization
\(a:I\to\mathbb R^n\) of this image, we obtain
\[
 \nabla F(x)=a(\tau(x)).
\] Euler's identity gives
\[
 F(x)=x\cdot\nabla F(x)=a(\tau(x))\cdot x.
\]
Differentiating this expression and comparing it with $dF=a(\tau)\cdot dx$ gives
\[
 \bigl(a'(\tau(x))\cdot x\bigr)d\tau=0.
\]
Since $d\tau\ne0$ in the rank-one parametrization, we obtain
\[
 a'(\tau(x))\cdot x=0,
\]
which proves~\eqref{envelope-class}. On a connected open set on which $D^2F=0$, the gradient is constant, so $F=c\cdot x+d$. Degree-one homogeneity forces $d=0$, hence $F$ is linear.

For the local converse, suppose that a $C^2$ degree-one homogeneous function admits~\eqref{envelope-class} on $V$. Differentiation gives
\[
 dF=a(\tau)\cdot dx+\bigl(a'(\tau)\cdot x\bigr)d\tau=a(\tau)\cdot dx,
\]
so $\nabla F=a(\tau)$. Hence
\[
 D^2F=a'(\tau)\otimes\nabla\tau,
\]
which has rank at most one. Proposition~\ref{prop:flatness} shows that the graph is flat on $V$. The linear case is immediate.
\end{proof}

\begin{remark}\label{rem:two-input-classification}
When $n=2$ and $\beta=1$, equation~\eqref{euler-hessian} gives $D^2F(x)x=0$. Since $x\ne0$ and $D^2F$ is a $2\times2$ matrix, this automatically implies $\operatorname{rank}D^2F\le1$. Thus the degree-one condition itself is sufficient for flatness in two variables. Consequently, Theorem~\ref{thm:corrected-homogeneous-classification} reduces in the two-input case to the familiar alternatives
\[
 \beta=1
 \qquad\text{or}\qquad
 \beta\ne0,1,\quad F(x)=C(c_1x_1+c_2x_2)^\beta,
\]
in agreement with the recent two-input classification in \cite{Previous2D} and with the two-dimensional converse established in \cite{CV2013}.
\end{remark}

\begin{remark}
The distinction in Proposition~\ref{prop:effective-two} is sharp at the level of the rank argument. For instance, on $\mathbb R_+^n$ the function
\[
 F_2(x)=\sqrt{x_1^2+x_2^2}
\]
is positive, non-linear, homogeneous of degree one, and depends effectively on only two inputs. Its Hessian has rank one, so its graph is flat. In contrast,
\[
 F_n(x)=\sqrt{x_1^2+\cdots+x_n^2}
\]
has
\[
 D^2F_n=\frac1{\|x\|}I-\frac1{\|x\|^3}xx^T,
\]
whose kernel is spanned by $x$ and whose restriction to $x^\perp$ is multiplication by $1/\|x\|$. Hence
\[
 \operatorname{rank}D^2F_n=n-1>1
\]
when $n>2$, and its graph is not flat. Thus, degree-one homogeneity alone does not imply flatness in
dimensions $n>2$.
\end{remark}

\begin{remark}
The degree-one converse in Theorem~1.1 of \cite{CV2013}, as stated, does not extend to dimensions \(n>2\) and requires a correction. More precisely, degree-one homogeneity implies
flatness automatically for two variables, but not in higher
dimensions, as the preceding example shows. Consequently, the
higher-dimensional degree-one branch of Theorem~1.1 in \cite{CV2013}, and
the corresponding constant-returns-to-scale branch of Theorem~4.2 therein,
should be replaced by the degree-one statement in
Theorem~\ref{thm:corrected-homogeneous-classification}. The non-degree-one
branch of Theorem~1.1 is correct and is retained in part~(1) of the present theorem.
Moreover, Corollary~3.1 and Theorem~4.1 of \cite{CV2013}, which concern the
two-input case, are unaffected by this correction.
\end{remark}

The preceding homogeneous theorem can now be combined with
Theorem~\ref{thm:main-rigidity} to obtain the classification
relevant to the variable-exponent production functions studied in this
paper.

\begin{corollary}\label{cor:local-classification-production}
Let
\[
Y(x)=A\prod_{\alpha=1}^{n}x_\alpha^{f_\alpha(x)},
\qquad n>2,
\]
be a nonconstant variable-returns-to-scale Cobb--Douglas production
function on a connected open set \(U\subset\mathbb R_+^n\). If its graph
hypersurface is flat, then its local return-to-scale index is
a nonzero constant \(\beta\), and the following alternatives hold.

\begin{enumerate}
\item If \(\beta\ne1\), then there exist a nonzero constant vector
\(c\in\mathbb R^n\) and a constant \(C>0\), with \(c\cdot x>0\) on
\(U\), such that
\[
Y(x)=C(c\cdot x)^\beta.
\]

\item If \(\beta=1\), set
\[
U_1=\{x\in U:\operatorname{rank}D^2Y(x)=1\}.
\]
For every \(p\in U_1\) there is a neighborhood \(V\subset U\) of
\(p\), an interval \(I\), a \(C^1\) regular curve
\(a:I\to\mathbb R^n\), and a \(C^1\) function \(\tau:V\to I\) such
that
\[
Y(x)=a(\tau(x))\cdot x,
\qquad
a'(\tau(x))\cdot x=0.
\]
On every connected open subset on which \(D^2Y=0\), the production
function \(Y\) is linear.
\end{enumerate}

Conversely, within the class of production functions considered in this
paper, every function in the first alternative has a flat
graph. In the degree-one case, whenever $Y$ admits locally the
representation displayed in part~(2), its graph is flat on
the corresponding neighborhood; the same holds on every linear region.
\end{corollary}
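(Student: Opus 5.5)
The plan is to combine Theorem~\ref{thm:main-rigidity} with Theorem~\ref{thm:corrected-homogeneous-classification}, after checking that the hypotheses of the latter hold for $F=Y$.

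\textbf{Step 1: homogeneity.} Since the graph of $Y$ is flat and $n>2$, Theorem~\ref{thm:main-rigidity} gives a constant $\beta$ with $r\equiv\beta$ on $U$. Proposition~\ref{prop:homogeneous-characterization} (equivalently Corollary~\ref{cor:homogeneity}) then shows that $Y$ is homogeneous of degree $\beta$ in the relative-domain sense.

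\textbf{Step 2: the hypotheses of the homogeneous classification.} $Y$ is positive because $A>0$ and $x_\alpha>0$. It is $C^2$, since $\ln Y=\ln A+\sum_\alpha f_\alpha\ln x_\alpha$ with $f_\alpha\in C^2(U)$. It is nonconstant by assumption. To see that $\beta\neq0$, I would repeat the degree-zero argument preceding Theorem~\ref{thm:corrected-homogeneous-classification}:
\begin{itemize}
\item Suppose $\beta=0$. Since $Y$ is nonconstant and $U$ is connected, $\nabla Y(p)\ne0$ at some point $p$.
\item Near $p$, the identity $D^2Y\,x=-\nabla Y\neq0$ together with flatness forces $\operatorname{Im}D^2Y=\operatorname{span}\{\nabla Y\}$.
\item The normalized-gradient argument then gives $\nabla Y=q\,c$ for a fixed vector $c$.
\item Euler's identity yields $c\cdot x=0$ on an open set, which is a contradiction.
\end{itemize}
Hence $\beta\ne0$. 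This is the one point needing care: the earlier argument is written for a generic $F$, so I would verify that it only uses $C^2$ regularity, degree-zero homogeneity and flatness near $p$, all of which hold here.

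\textbf{Step 3: the two cases.} If $\beta\ne1$, part~(1) of Theorem~\ref{thm:corrected-homogeneous-classification} applied to $F=Y$ gives $Y=C(c\cdot x)^\beta$ with $c\cdot x>0$ on $U$. If $\beta=1$, part~(2) gives, on the rank-one set $U_1$, the local envelope representation $Y=a(\tau(x))\cdot x$ with $a'(\tau(x))\cdot x=0$. It also gives linearity on every connected open set where $D^2Y=0$.

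\textbf{Step 4: the converse.} For $Y=C(c\cdot x)^\beta$ we have $D^2Y=C\beta(\beta-1)(c\cdot x)^{\beta-2}cc^T$. This has rank at most one, so Proposition~\ref{prop:flatness} gives flatness. In the degree-one case, the converse in part~(2) applies: $\nabla Y=a(\tau)$ and $D^2Y=a'(\tau)\otimes\nabla\tau$, which has rank at most one. Linear regions have $D^2Y=0$ and are therefore flat.

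The phrase ``within the class considered'' needs no extra work, because the converse concerns only the functional form of $Y$. The main obstacle is therefore only the bookkeeping in Step~2, namely excluding $\beta=0$; everything else is a direct citation.
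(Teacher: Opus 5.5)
Your proposal is correct and follows the paper's proof essentially step for step. The paper also obtains homogeneity of degree $\beta$ from Theorem~\ref{thm:main-rigidity} and Proposition~\ref{prop:homogeneous-characterization}, excludes $\beta=0$ by the argument preceding Theorem~\ref{thm:corrected-homogeneous-classification}, and reads off both alternatives and the converse from that theorem; you additionally spell out checks the paper leaves implicit, namely the positivity and $C^2$ regularity of $Y$ and the explicit Hessian computations for the converse.
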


\begin{proof}
By Theorem~\ref{thm:main-rigidity} and
Proposition~\ref{prop:homogeneous-characterization}, the function \(Y\) is
homogeneous of some constant degree \(\beta\). Since \(Y>0\) and is
nonconstant, the degree-zero case is excluded by the argument preceding
Theorem~\ref{thm:corrected-homogeneous-classification}. The assertions now
follow directly from Theorem~\ref{thm:corrected-homogeneous-classification}.
\end{proof}

\begin{remark}
For two inputs,  flatness is equivalent to vanishing Gaussian curvature. In higher dimensions, flatness and vanishing Gauss--Kronecker curvature are different conditions. The present paper concerns flatness, characterized by the vanishing of the Riemann curvature tensor, equivalently by $\operatorname{rank}D^2F\le1$ for graph hypersurfaces. The weaker problem based only on vanishing Gauss--Kronecker curvature remains a separate question.
\end{remark}

\section*{Acknowledgements}
The third author was supported by Ongoing Research
Funding Program (ORF -- 2026 - 413), King Saud University, Riyadh, Saudi
Arabia. The fourth author was supported by Petroleum-Gas University of Ploiești through the Internal Scientific Research Grant No. 14239/22.06.2026.

\section*{Declarations}

\begin{itemize}
\item \textbf{Competing interests} The authors declare no competing interests.
\item \textbf{Data availability} No datasets were generated or analysed during the current study.
\item \textbf{Author contribution} All authors contributed to the research, writing and reviewing of the present
manuscript.
\end{itemize}

\end{document}